\documentclass[12pt]{article}

\usepackage{amsmath,amssymb}

\setlength{\textheight}{9.5in} \setlength{\textwidth}{6.2in}

\setlength{\topmargin}{-.5in} \setlength{\oddsidemargin}{0in}

\newtheorem{theorem}{Theorem}[section]
\newtheorem{proposition}[theorem]{Proposition}

\newtheorem{corollary}[theorem]{Corollary}
\newtheorem{lemma}[theorem]{Lemma}
\newtheorem{definition}[theorem]{Definition}
\newtheorem{example}[theorem]{Example}
\newtheorem{remark}[theorem]{Remark}

\newenvironment{proof}{\smallskip\par{\sc Proof.}\enspace}%
 {{\unskip\nobreak\hfil\penalty50\hskip2em
          \hbox{}\nobreak\hfil{\rule[-1pt]{5pt}{10pt}}
          \parfillskip=0pt\finalhyphendemerits=0
          \par\medskip}} 

\makeatletter
\def\section{\@startsection {section}{1}{\z@}{3.25ex plus 1ex minus
 .2ex}{1.5ex plus .2ex}{\large\bf}}
\def\subsection{\@startsection{subsection}{2}{\z@}{3.25ex plus 1ex minus
 .2ex}{1.5ex plus .2ex}{\normalsize\bf}}
\@addtoreset{equation}{section} 
\makeatother

\title{ Standardizing densities on Gaussian spaces }

\author{Alberto Lanconelli\footnote{Dipartimento di Matematica, Universit\'a degli Studi di Bari Aldo Moro, Via E. Orabona 4, 70125 Bari - Italia. E-mail: \emph{alberto.lanconelli@uniba.it}}}

\date{\empty}

\begin{document}

\maketitle

\numberwithin{equation}{section}

\bigskip

\begin{abstract}
In the present note we investigate the problem of standardizing random variables taking values on infinite dimensional Gaussian spaces. In particular, we focus on the transformations induced on densities by the selected standardization procedure. We discover that, under certain conditions, the Wick exponentials are the key ingredients for treating this kind of problems.
\end{abstract}

Key words and phrases:  standardized random variables, abstract Wiener spaces, Wick exponentials. \\

AMS 2000 classification: 60H30, 60G15, 60H07.

\allowdisplaybreaks

\section{Introduction}

The standardization of random variables is one of the elementary tools in probability theory and mathematical statistics. A random variable is, by definition, \emph{standardized} if it has mean zero and variance one. Given a random variable $X$ with mean $E[X]$ and variance $Var(X)$, one may obtain a standardized version of it via the transformation
\begin{eqnarray}\label{standard}
\tilde{X}:=\frac{X-E[X]}{\sqrt{Var(X)}}.
\end{eqnarray}
One of the crucial features of the above standardizing procedure is that the family of Gaussian distributions is closed under that transformation. The leading role of the prescription (\ref{standard}) is established by the central limit theorem, where the sequence under investigation
\begin{eqnarray*}
\frac{X_1+\cdot\cdot\cdot+X_n-nE[X_1]}{\sqrt{nVar(X_1)}}
\end{eqnarray*}
corresponds to the standardized version, according to (\ref{standard}), of $X_1+\cdot\cdot\cdot+X_n$.  \\
Now consider a random variable $X$ whose law is absolutely continuous with respect to the Lebesgue measure and denote by $f_X$ the corresponding density. Observe that the properties of the Lebesgue measure imply that also the law of $\tilde{X}$ possesses a density whose expression is
\begin{eqnarray*}
y\mapsto f_{\tilde{X}}(y)=\sqrt{Var(X)}f_X(\sqrt{Var(X)}y+E[X]).
\end{eqnarray*}
If we consider densities with respect to different reference measures, then the absolute continuity of the law of $X$ does not, in general, even guarantee the absolute continuity of the law of $\tilde{X}$. In this note we will focus on Gaussian reference measures and investigate standardizing procedures for densities with respect to that measure.\\
In finite dimensional Euclidean spaces it is straightforward to pass from densities with respect to the Lebesgue measure to densities with respect to (non degenerate) Gaussian measures and, from this point of view, the scope of our investigation may look artificial. However, when the reference space is infinite dimensional, for instance the classical Wiener space, then no Lebesgue-type measure is available and the standardization problem is far from being trivial. In fact, according to the Cameron-Martin theorem (see for instance Theorem 2.4.5 in \cite{Bogachev}) the measure obtained by the composition of the Wiener measure with a translation is absolutely continuous with respect to the original Wiener measure if and only if the translation is performed along particular directions. Moreover, Wiener measures with different covariances are mutually orthogonal (see Theorem 5.3 in \cite{Kuo Banach}).\\
The problem of standardizing random variables taking values on infinite dimensional Gaussian spaces is quite natural if we think that solutions to stochastic differential equations driven by a Wiener process are precisely objects of this kind. Consider, for instance, the stochastic differential equation
\begin{eqnarray}\label{SDE}
dX_t=b(X_t)dt+dW_t,\quad t\in [0,T]\quad\quad X_0=0
\end{eqnarray}
where $b:\mathbb{R}\to\mathbb{R}$ is a measurable function and $\{W_t\}_{0\leq t\leq T}$ is a one dimensional standard Wiener process. If the function $b$ satisfies the so-called Novikov's condition (see Corollary 5.13 in \cite{KS}), i.e.
\begin{eqnarray*}
E\left[\exp\left\{\frac{1}{2}\int_0^Tb^2(W_t)dt\right\}\right]<+\infty
\end{eqnarray*}
where $E$ denotes the expectation on the probability space where the Wiener process $\{W_t\}_{0\leq t\leq T}$ is defined, then exploiting the Girsanov's theorem (see Theorem 5.1 in \cite{KS}) we can state that the measure induced by the solution $\{X_t\}_{0\leq t\leq T}$ of (\ref{SDE}) is absolutely continuous with respect to the Wiener measure (the law of the Wiener process on the space of continuous functions) with a density given by
\begin{eqnarray*}
\exp\left\{\int_0^Tb(W_t)dW_t-\frac{1}{2}\int_0^Tb^2(W_t)dt\right\}.
\end{eqnarray*}
In the papers \cite{LS 2016} and \cite{L 2017} the authors investigated the validity of local limit theorems, i.e. central limit theorems for densities, for a class of random variables including the example described above. For this type of results one needs to set the mean and covariance of the sequence to coincide with those of the limiting Gaussian measure. In both papers the limiting measure is the Wiener measure whose mean is zero and the covariance is the identity operator. In \cite{LS 2016} the random variables are assumed to be already standardized while in \cite{L 2017} only the assumption of the first moment is relaxed.\\ 
The aim of this note is to analyze the behaviour of the densities, with respect to the Wiener measaure, of random variables when we try to perturb their first and second moments. We will see that, while perturbations of the first moment are quite simple to perform at a good level of generality, perturbations of the second moment are far more delicate and require strong assumptions on the densities of the random variables. The role of the Wick product, and in particular of the Wick exponentials, will appear to be very natural in our approach to the above mentioned problem. Probabilistic interpretations of the Wick product have been already investigated in the papers \cite{DLS 2011}, \cite{LS 2016} and \cite{L 2017} for Gaussian measures, in \cite{LSportelli2011} and \cite{LS 2013} for the Poisson distribution and in \cite{LSportelli} for the chi-square distribution. \\
The paper is organized as follows:  Section 2 is a quick overview on basic definitions and notations from the analysis on abstract Wiener spaces. This theory provides a unified framework for treating the following leading examples: the Euclidean space $\mathbb{R}^d$ endowed with the standard $d$-dimensional Gaussian measure and the classical Wiener space $C_0([0,1])$ of continuous functions starting at zero endowed with the classical Wiener measure. Section 3 introduces the Wick exponentials, key ingredient of our investigation, and some their properties which are then employed in the perturbations of the first and second moments of random variables taking values on an abstract Wiener space.

\section{Framework}

In this section we recall for the reader's convenience few definitions and notations and collect some useful formulas that we utilize throughout the paper. For more details on the subject we refer the interested reader to one of the books \cite{Bogachev}, \cite{Janson}, \cite{Kuo Banach} and \cite{Nualart}. See also the paper \cite{DPV}.
The triple $(H,W,\mu)$ is called \emph{abstract Wiener space} if $(H,\langle\cdot,\cdot\rangle_H)$ is a separable Hilbert space, which is continuously and densely embedded in the Banach space $(W,|\cdot|_W)$, and $\mu$ is a Gaussian probability measure on the Borel sets of $W$ such that
\begin{eqnarray}\label{Gaussian characteristic}
\int_{W}e^{i\langle w,\varphi\rangle}d\mu(w)=e^{-\frac{1}{2}|\varphi|_H^2},\quad\mbox{ for all }\varphi\in W^*.
\end{eqnarray}
Here $W^*\subset H$ denotes the dual space of $W$, which in turn is dense in $H$, and $\langle\cdot,\cdot\rangle$ stands for the dual pairing between $W$ and $W^*$. We will refer to $H$ as the \emph{Cameron-Martin} space of $W$. Observe that
\begin{eqnarray}\label{H=W}
\langle w,\varphi\rangle=\langle w,\varphi\rangle_H\quad\mbox{ if }\quad w\in H\mbox{ and }\varphi\in W^*.
\end{eqnarray}
We point out that $H=\mathbb{R}^d$ when $W$ is the Euclidean space $\mathbb{R}^d$ while $H=H_0^1([0,1])$ when $W$ is the classical Wiener space $C_0([0,1])$. In the sequel we denote by $\Vert\cdot\Vert_p$ the norm in the space $\mathcal{L}^p(W,\mu)$ for $p\geq 1$.\\
\noindent Recall that by the Wiener-It\^o chaos decomposition theorem any element $f$ in $\mathcal{L}^2(W,\mu)$ has an infinite orthogonal expansion
\begin{eqnarray*}
f=\sum_{k\geq 0}\delta^k(f_k),
\end{eqnarray*}
where $f_k\in H^{\hat{\otimes}k}$, the space of symmetric elements of $H^{\otimes k}$, and $\delta^k(f_k)$ stands for the multiple It\^o integral of $f_k$. Here, $\delta^0(f_0)$ coincide with the mean of $f$ with respect to $\mu$ and $\delta^1(f_1)$ will be denoted in the sequel with the symbols $\delta(f_1)$ or $\langle w,f_1\rangle$.\\
For $f,g\in\mathcal{L}^2(W,\mu)$ with $f=\sum_{k\geq 0}\delta^k(f_k)$ and $g=\sum_{k\geq 0}\delta^k(g_k)$ one has the identity
\begin{eqnarray}\label{orthogonality chaos expansion}
\int_Wf(w)g(w)d\mu(w)=\sum_{k\geq 0}k!\langle f_k,g_k\rangle_{H^{\otimes k}}.
\end{eqnarray}
On the space $\mathcal{L}^2(W,\mu)$ one can define a multiplication between functions through the prescription
\begin{eqnarray}\label{def wick product}
\delta^k(f_k)\diamond\delta^j(f_j):=\delta^{k+j}(f_k\hat{\otimes} f_j),\quad k,j\geq 0
\end{eqnarray}
where $\hat{\otimes}$ denotes the symmetric tensor product. This is named \emph{Wick product} of $\delta^k(f_k)$ and $\delta^j(f_j)$ and it is extended by linearity to finite sums of multiple It\^o integrals. We remark that this product is unbounded on $\mathcal{L}^p(W,\mu)$ for any $p\geq 1$. \\
For additional information on the Wick product (and its role in the theory of stochastic differential equations)  we refer to the book \cite{HOUZ}, the paper \cite{DLS 2013} and the references quoted there.

\section{Standardized densities}

In this section we investigate the problem of standardizing random variables taking values on an abstract Wiener space. More precisely, for a given such random object we will be interested in finding a procedure for setting its mean to zero and its covariance to be the identity operator. In particular, we will focus on how this procedure will modify the density of the original random variable.

\subsection{Wick exponentials}

We begin by introducing the key tool of our investigation.

\begin{definition}\label{def wick exponential}
Let $Z\in\mathcal{L}^2(W,\mu)$ be such that the series
\begin{eqnarray}\label{series}
\sum_{n\geq 0}\frac{Z^{\diamond n}}{n!}\quad\mbox{ where }\quad Z^{\diamond n}:=Z\diamond\cdot\cdot\cdot\diamond Z\mbox{ ($n$-times)}
\end{eqnarray}
converges in $\mathcal{L}^p(W,\mu)$ for some $p\geq 1$. Then, the \emph{Wick exponential} of $Z$ is defined to be
\begin{eqnarray}
\exp^{\diamond}\{Z\}:=\sum_{n\geq 0}\frac{Z^{\diamond n}}{n!}.
\end{eqnarray}
\end{definition}

\noindent Wick exponentials are sometimes denoted in the literature with the symbol $:\exp\{Z\}:$ (see for instance \cite{HKPS} and \cite{Kuo}). It is easy to see, exploiting the basic properties of the Wick product, that
\begin{eqnarray*}
\exp^{\diamond}\{Z+Y\}=\exp^{\diamond}\{Z\}\diamond\exp^{\diamond}\{Y\}
\end{eqnarray*}
and
\begin{eqnarray}\label{functor}
\int_W\exp^{\diamond}\{Z\}d\mu=\exp\left\{\int_WZd\mu\right\}.
\end{eqnarray}
In general, given $Z\in\mathcal{L}^2(W,\mu)$ the series (\ref{series}) converges in the topology of the Kondratiev's distribution spaces (see \cite{HOUZ}) which constitute a family of spaces of stochastic generalized functions containing $\mathcal{L}^p(W,\mu)$ for all $p> 1$. Nevertheless, for some particular choices of $Z$, the series (\ref{series}) possesses the required convergence property.\\
We observe that
\begin{eqnarray*}
\exp^{\diamond}\{Z\}=\exp\{Z\}
\end{eqnarray*}
when  $Z$ is constant. The two most important Wick exponentials are those related to the choices $Z=\delta(h)$ and $Z=\delta(h)^2$, for $h\in H$.

\begin{definition}
Let $h\in H$. Then, we set
\begin{eqnarray}\label{def linear stochastic exponential}
\mathcal{E}(h):=\exp^{\diamond}\{\delta(h)\}
\end{eqnarray}
and
\begin{eqnarray}\label{def quadratic stochastic exponential}
\mathcal{E}_2(h):=\exp^{\diamond}\left\{\frac{\delta(h)^2-|h|_H^2}{2}\right\}.
\end{eqnarray}
\end{definition}

\noindent We remark that the additive constant $-|h|_H^2$ in the exponent of (\ref{def quadratic stochastic exponential}) serves to guarantee the equality
\begin{eqnarray*}
\int_W\mathcal{E}_2(h)d\mu=1
\end{eqnarray*}
in virtue of formula (\ref{functor}). This fact, in connection with the positivity of $\mathcal{E}_2(h)$ proven below, will suggest to interpret $\mathcal{E}_2(h)$ as a probability density with respect to the Wiener measure $\mu$.

\begin{proposition}
For any $h\in H$ the Wick exponential $\mathcal{E}(h)$ belongs to $\mathcal{L}^p(W,\mu)$ for all $p\geq 1$ and it can be represented as
\begin{eqnarray*}
\mathcal{E}(h)=\exp\left\{\delta(h)-\frac{|h|_H^2}{2}\right\}.
\end{eqnarray*}
For $h\in H$ the Wick exponential $\mathcal{E}_2(h)$ belongs to $\mathcal{L}^p(W,\mu)$ if and only if $|h|_H^2<\frac{1}{p-1}$ and it can be represented as
\begin{eqnarray*}
\mathcal{E}_2(h)=\frac{1}{\sqrt{1+|h|_H^2}}\exp\left\{\frac{\delta(h)^2}{2(1+|h|_H^2)}\right\}.
\end{eqnarray*}
\end{proposition}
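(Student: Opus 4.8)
The plan is to reduce both assertions to classical facts about Hermite polynomials of the single Gaussian variable $\delta(h)$. I would first assume $h\neq 0$ (the case $h=0$ being trivial) and write $\sigma:=|h|_{H}$. Since $h^{\otimes n}$ is already symmetric we have $\delta(h)^{\diamond n}=\delta^{n}(h^{\otimes n})$, and by the standard identification of multiple It\^o integrals of $h^{\otimes n}$ with Hermite polynomials, $\delta(h)^{\diamond n}=\sigma^{n}H_{n}(\delta(h)/\sigma)$, where $H_{n}$ denote the monic Hermite polynomials characterized by $\sum_{n\geq0}H_{n}(x)t^{n}/n!=\exp\{tx-t^{2}/2\}$ (so $H_{0}=1$, $H_{2}(x)=x^{2}-1$; this is consistent with $\delta(h)\diamond\delta(h)=\delta(h)^{2}-\sigma^{2}$). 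Two further tools enter: the identity $\|\delta(h)^{\diamond n}\|_{2}^{2}=n!\,\sigma^{2n}$, which is immediate from (\ref{orthogonality chaos expansion}) since $\langle h^{\otimes n},h^{\otimes n}\rangle_{H^{\otimes n}}=\sigma^{2n}$; and Nelson's hypercontractivity on Wiener chaos, $\|\delta^{k}(g_{k})\|_{p}\leq(p-1)^{k/2}\|\delta^{k}(g_{k})\|_{2}$ for $g_{k}\in H^{\hat{\otimes}k}$, $p\geq2$ (and $\|\cdot\|_{p}\leq\|\cdot\|_{2}$ for $1\leq p\leq2$ on the probability space $(W,\mu)$).

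For $\mathcal{E}(h)$ these two tools give $\|\delta(h)^{\diamond n}/n!\|_{p}\leq(p-1)^{n/2}\sigma^{n}/\sqrt{n!}$ for every $p\geq1$, and $\sum_{n\geq0}(p-1)^{n/2}\sigma^{n}/\sqrt{n!}<\infty$; hence the series defining $\mathcal{E}(h)$ converges absolutely in $\mathcal{L}^{p}(W,\mu)$ for all $p\geq1$. Rewriting the $n$-th term via Hermite polynomials and summing with the generating function above at $t=\sigma$, $x=\delta(h)/\sigma$, identifies the limit as $\exp\{\delta(h)-\sigma^{2}/2\}$, whose membership in every $\mathcal{L}^{p}$ is just the finiteness of the exponential moments of a Gaussian.

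For $\mathcal{E}_{2}(h)$ I would use $\delta(h)^{2}-\sigma^{2}=\delta(h)^{\diamond2}$, so that $Z=\tfrac12\delta(h)^{\diamond2}$ and $Z^{\diamond k}=2^{-k}\delta(h)^{\diamond2k}$, giving
\begin{eqnarray*}
\sum_{k\geq0}\frac{Z^{\diamond k}}{k!}=\sum_{k\geq0}\frac{\sigma^{2k}}{k!\,2^{k}}\,H_{2k}\!\Big(\frac{\delta(h)}{\sigma}\Big).
\end{eqnarray*}
The closed form then comes from the classical even-index generating function $\sum_{k\geq0}\frac{H_{2k}(x)}{k!}\big(\tfrac{u}{2}\big)^{k}=(1+u)^{-1/2}\exp\!\big\{\tfrac{ux^{2}}{2(1+u)}\big\}$, valid for $|u|<1$ (a specialization of Mehler's formula at a zero argument), taken at $u=\sigma^{2}$, $x=\delta(h)/\sigma$. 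For the admissible range of $p$: hypercontractivity yields $\|Z^{\diamond k}/k!\|_{p}\leq(p-1)^{k}\sqrt{(2k)!}\,\sigma^{2k}/(k!\,2^{k})$, and since $\sqrt{(2k)!}/(k!\,2^{k})=2^{-k}\sqrt{\binom{2k}{k}}\sim(\pi k)^{-1/4}$, this bound is summable when $(p-1)\sigma^{2}<1$, which gives the ``if'' part (for $1\leq p<2$ one first obtains convergence in some $\mathcal{L}^{r}$ with $p<r$ and $(r-1)\sigma^{2}<1$). For ``only if'': when $\sigma^{2}\geq1$ the scalar power series $\sum_{k}\frac{H_{2k}(x)}{k!}(u/2)^{k}$ has radius of convergence exactly $1$ in $u$ for each $x$ (the right-hand side being singular at $u=-1$), so the partial sums of the Wick series diverge $\mu$-a.e.\ and no $\mathcal{L}^{p}$ limit exists; when $\sigma^{2}<1$ the series converges in $\mathcal{L}^{2}$ to $F:=(1+\sigma^{2})^{-1/2}\exp\{\delta(h)^{2}/(2(1+\sigma^{2}))\}$, and a direct Gaussian integral shows $F\in\mathcal{L}^{p}$ precisely when $p\sigma^{2}/(1+\sigma^{2})<1$, i.e.\ $\sigma^{2}<1/(p-1)$; since any $\mathcal{L}^{p}$-limit of the series must coincide with $F$, the threshold is sharp (the boundary value $\sigma^{2}=1/(p-1)$ is excluded because there the Gaussian integral diverges).

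The convergence estimates and the Gaussian integral are routine. The step I expect to be the main obstacle is the summation of the Wick series for $\mathcal{E}_{2}(h)$ to its closed form — locating and correctly orienting the even-index Hermite generating-function identity (the sign of $u$ is easy to get wrong) — together with the matching lower bound making the exponent threshold $\sigma^{2}=1/(p-1)$ sharp rather than merely sufficient.
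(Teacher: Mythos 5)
Your argument is correct and follows the same route the paper takes, namely reducing both Wick exponentials to Hermite generating functions of the one-dimensional Gaussian $\delta(h)$ and then checking integrability by a direct Gaussian integral; the paper merely outsources these computations to Theorem 3.33 of \cite{Janson} and Proposition 3.5 of \cite{AOU}, whereas you carry them out explicitly (together with the hypercontractivity estimates giving $\mathcal{L}^p$-convergence of the defining series). The only soft spot is the claim that for $|h|_H\geq 1$ the partial sums diverge $\mu$-a.e.\ --- a radius of convergence equal to $1$ does not by itself rule out convergence at $u=1$ --- but this boundary case concerns only whether $\mathcal{E}_2(h)$ is defined at all there, is not addressed by the paper either, and does not affect the stated threshold $|h|_H^2<\frac{1}{p-1}$.
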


\begin{proof}
The statement concerning the Wick exponential $\mathcal{E}(h)$ is well known and can be verified directly by means of the generating function of the Hermite polynomials (see for instance Theorem 3.33 in \cite{Janson}). Moreover, observing that $\delta(h)^2-|h|_H^2=\delta(h)^{\diamond 2}$, one can find in Proposition 3.5 of \cite{AOU} the claimed representation for $\mathcal{E}_2(h)$. This in turn can be used to check the condition for the integrability of $\mathcal{E}_2(h)$.
\end{proof}

\subsection{Setting the mean to zero}

We are now going to show the role of the Wick exponential $\mathcal{E}(h)$ in the process of standardizing random variables taking values on Gaussian spaces. To this aim we recall the crucial interplay between the Wick product $\diamond$ and $\mathcal{E}(h)$:
\begin{eqnarray}\label{S-transform}
\int_W (f\diamond g)\cdot\mathcal{E}(h)d\mu=\int_W f\cdot\mathcal{E}(h)d\mu\cdot\int_W g\cdot\mathcal{E}(h)d\mu
\end{eqnarray}
which is valid for all $h\in H$ and $f,g\in\mathcal{L}^2(W,\mu)$ such that $f\diamond g\in\mathcal{L}^p(W,\mu)$ for some $p>1$.

\begin{theorem}\label{set mean zero}
Let $X$ be a random variable taking values on $W$. Assume that the law of $X$ is absolutely continuous with respect to $\mu$ with a density $f$ belonging to $\mathcal{L}^p(W,\mu)$ for some $p> 1$. Then, the expectation $E[X]$ of $X$ belongs to $H$ and coincides with the first kernel in the chaos decomposition of $f$. Moreover, the density of $X-E[X]$ is given by $f\diamond\mathcal{E}(-E[X])$ and belongs to $\mathcal{L}^q(W,\mu)$ for all $q<p$.
\end{theorem}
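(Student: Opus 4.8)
The plan is to establish the two assertions in turn, the first by pairing $X$ against elements of $W^*$, the second by combining the Cameron--Martin formula with the multiplicativity of the $S$-transform under the Wick product.

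For the mean, I would first note that $X$ is Bochner integrable as a $W$-valued random variable: by Fernique's theorem $w\mapsto|w|_W$ lies in every $\mathcal{L}^{p'}(W,\mu)$, so H\"older gives $\int_W|w|_W\,|f(w)|\,d\mu(w)<\infty$ and $E[X]$ is a well-defined element of $W$. To see that it belongs to $H$, I would test against an arbitrary $\varphi\in W^*$: using $(\ref{H=W})$ one has $\langle w,\varphi\rangle=\delta(\varphi)(w)$, and the chaos expansion $f=\sum_{k\ge0}\delta^k(f_k)$ together with the orthogonality relation $(\ref{orthogonality chaos expansion})$ yields
\begin{eqnarray*}
\langle E[X],\varphi\rangle=\int_W\delta(\varphi)(w)\,f(w)\,d\mu(w)=\langle f_1,\varphi\rangle_H=\langle f_1,\varphi\rangle.
\end{eqnarray*}
Since $W^*$ separates the points of $W$, this forces $E[X]=f_1\in H$. (When $1<p<2$ the functional $\varphi\mapsto\int_W\delta(\varphi)f\,d\mu$ is still bounded on $H$ by H\"older, so $f_1$ is well defined even if the chaos series need not converge in $\mathcal{L}^2$.)

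Set $m:=E[X]=f_1\in H$. For the law of $X-m$, take a bounded Borel $g:W\to\mathbb{R}$, write $g(w-m)f(w)=G(w-m)$ with $G(v):=g(v)f(v+m)$, and apply the Cameron--Martin theorem (legitimate since $m\in H$):
\begin{eqnarray*}
E[g(X-m)]=\int_W G(w-m)\,d\mu(w)=\int_W g(w)\,f(w+m)\,\mathcal{E}(-m)(w)\,d\mu(w).
\end{eqnarray*}
Hence $X-m$ has a $\mu$-density, namely $\Psi:=f(\cdot+m)\,\mathcal{E}(-m)$, and it remains to identify $\Psi$ with $f\diamond\mathcal{E}(-m)$. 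I would do this by computing $S$-transforms $Sg(h):=\int_W g\,\mathcal{E}(h)\,d\mu$. From $\mathcal{E}(a)\,\mathcal{E}(b)=e^{\langle a,b\rangle_H}\mathcal{E}(a+b)$, the normalisation $(\ref{functor})$, and one more use of Cameron--Martin,
\begin{eqnarray*}
S\Psi(h)=e^{-\langle m,h\rangle_H}\int_W f(w+m)\,\mathcal{E}(h-m)(w)\,d\mu(w)=e^{-\langle m,h\rangle_H}\int_W f(w+h)\,d\mu(w)=e^{-\langle m,h\rangle_H}\,Sf(h),
\end{eqnarray*}
while $(\ref{S-transform})$ and $S\mathcal{E}(-m)(h)=e^{-\langle m,h\rangle_H}$ give $S(f\diamond\mathcal{E}(-m))(h)=Sf(h)\,e^{-\langle m,h\rangle_H}$; injectivity of the $S$-transform then yields $\Psi=f\diamond\mathcal{E}(-m)$.

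For the integrability, I would fix $q<p$ and start from $\int_W\Psi(w)^q\,d\mu(w)=\int_W|f(w+m)|^q\,\mathcal{E}(-m)(w)^q\,d\mu(w)$; the Cameron--Martin formula (shift by $m$, using $\delta(m)(m)=|m|_H^2$) turns this into
\begin{eqnarray*}
\int_W\Psi(w)^q\,d\mu(w)=e^{\frac{q-1}{2}|m|_H^2}\int_W|f(w)|^q\,e^{-(q-1)\delta(m)(w)}\,d\mu(w),
\end{eqnarray*}
and the last integral is finite by H\"older with exponents $p/q$ and $p/(p-q)$, since $|f|^q\in\mathcal{L}^{p/q}(\mu)$ (with norm $\|f\|_p^q$) and $e^{-(q-1)\delta(m)}$ is lognormal, hence in every $\mathcal{L}^r(W,\mu)$. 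This is precisely where $q<p$ enters; $q=p$ fails in general (it would require more integrability of $f$, of order $p+\varepsilon$). The step I expect to be the main obstacle is the identification $\Psi=f\diamond\mathcal{E}(-m)$: one must first make sense of the Wick product of $f$ — only assumed in $\mathcal{L}^p$, possibly not in $\mathcal{L}^2$ — with $\mathcal{E}(-m)$, and then justify both the multiplicativity $S(f\diamond g)=Sf\cdot Sg$ beyond the $\mathcal{L}^2$ setting in which $(\ref{S-transform})$ is stated and the injectivity of $S$. The clean way around this is to read all products inside the Kondratiev distribution spaces, where $f\diamond\mathcal{E}(-m)$ is defined exactly by its $S$-transform and $S$ is injective, and only afterwards note, from the computation above, that this distribution coincides with the genuine function $\Psi\in\mathcal{L}^q(W,\mu)$. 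The rest is routine manipulation of Gaussian integrals.
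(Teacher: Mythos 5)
Your proof is correct, and it reaches the same three conclusions by a route that differs from the paper's mainly in the middle step. For the mean, you and the paper do essentially the same computation ($\langle E[X],\varphi\rangle=\langle f_1,\varphi\rangle_H$ via the orthogonality relation), though your Fernique/Bochner argument for the existence of $E[X]\in W$ replaces the paper's moment estimates in its Remark, and your observation that $f_1$ must be defined as the Riesz representative of $\varphi\mapsto\int_W\delta(\varphi)f\,d\mu$ when $1<p<2$ addresses a subtlety the paper passes over silently. For the density, the paper takes $f\diamond\mathcal{E}(-E[X])$ as the candidate, invokes Gjessing's lemma as a black box to get nonnegativity and the $\mathcal{L}^q$ bound, and then verifies that the candidate has the right characteristic function by applying the $S$-transform identity at the imaginary argument $\mathcal{E}(i\varphi)$. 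You go in the opposite direction: you compute the law of $X-E[X]$ explicitly by the Cameron--Martin change of variables, obtaining the genuine function $\Psi=f(\cdot+m)\mathcal{E}(-m)$, and only then identify $\Psi$ with the Wick product by matching real $S$-transforms --- in effect reproving Gjessing's lemma in this special case --- after which your translation-plus-H\"older computation gives the $\mathcal{L}^q$ bound that the paper outsources to the reference. What your version buys is self-containedness and an explicit formula for the new density; what the paper's buys is brevity. Your closing remark about reading the Wick product and the multiplicativity of $S$ inside the Kondratiev spaces when $f\notin\mathcal{L}^2$ is the right way to close a gap that the paper's own application of its identity (stated only for $\mathcal{L}^2$ factors) also leaves open.
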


\begin{remark}\label{existence of mean}
The expectation of $X$ is defined to be the unique element $E[X]\in W$ such that
\begin{eqnarray}\label{def expectation}
E[\langle X,\varphi\rangle]=\langle E[X],\varphi\rangle,\quad\mbox{ for all }\quad\varphi\in W^*.
\end{eqnarray}
Its existence is guaranteed by the finiteness of all the moments of $\langle X,\varphi\rangle$, which in turn is implied by the assumption $f\in\mathcal{L}^p(W,\mu)$. In fact, for any $m\in\mathbb{N}$ and $\varphi\in W^*$ a simple application of the H\"older inequality gives
\begin{eqnarray*}
E[|\langle X,\varphi\rangle|^m]&=&\int_W|\langle w,\varphi\rangle|^md\mu_{X}(w)\\
&=&\int_W|\langle w,\varphi\rangle|^m f(w)d\mu(w)\\
&\leq&\Big(\int_W|\langle w,\varphi\rangle|^{qm} d\mu(w)\Big)^{\frac{1}{q}}\cdot\Vert f\Vert_p\\
&\leq&C(q)|\varphi|_H^{m}\Vert f\Vert_p
\end{eqnarray*}
where $q$ is the conjugate exponent of $p$ and $C(q)$ is a positive dependent on $q$. Moreover, taking $m=2$ in the previous inequalities, we see that if $\{\varphi_j\}_{j\geq 1}$ is a sequence in $W^*$ converging in the norm of $H$ to $h\in H$, then
\begin{eqnarray*}
E[|\langle X,\varphi_j\rangle-\langle X,\varphi_i\rangle|^2]&=&E[|\langle X,\varphi_j-\varphi_i\rangle|^2]\\
&\leq&C(q)|\varphi_j-\varphi_i|_H^2\Vert f\Vert_p.
\end{eqnarray*}
Therefore, $\{\langle X,\varphi_j\rangle\}_{j\geq 1}$ turns out to be a Cauchy sequence in $\mathcal{L}^2(W,\mu)$ and we can define $\langle X,h\rangle$ almost surely as the limit of this sequence.
\end{remark}

\begin{proof}
The existence of the mean $E[X]$ is guaranteed by Remark \ref{existence of mean}; moreover,
\begin{eqnarray*}
E[\langle X,\varphi\rangle]&=&\int_W\langle w,\varphi\rangle f(w)d\mu(w)\\
&=&\langle f_1,\varphi\rangle_H
\end{eqnarray*}
where we utilized equation (\ref{orthogonality chaos expansion}). Comparing with equation (\ref{def expectation}), this shows that $E[X]=f_1\in H$. Moreover, by Gjessing's lemma (see Theorem 2.10.6 in \cite{HOUZ}) we deduce that $f\diamond\mathcal{E}(-E[X])$ is a non negative element of $\mathcal{L}^q(W,\mu)$ for all $q<p$. Now, let $i$ be the imaginary unit and set for $\varphi\in W^*$
\begin{eqnarray*}
\mathcal{E}(i\varphi):=\exp\left\{i\delta(\varphi)+\frac{|\varphi|_H^2}{2}\right\}.
\end{eqnarray*}
Using identity (\ref{S-transform}) (with $\mathcal{E}(h)$ replaced by $\mathcal{E}(i\varphi)$) we can write
\begin{eqnarray*}
\int_We^{i\delta(\varphi)}(f\diamond\mathcal{E}(-E[X]))d\mu&=&e^{-\frac{|\varphi|_H^2}{2}}
\int_W\mathcal{E}(i\varphi)(f\diamond\mathcal{E}(-E[X]))d\mu\\
&=&e^{-\frac{|\varphi|_H^2}{2}}\int_W\mathcal{E}(i\varphi)fd\mu
\int_W\mathcal{E}(i\varphi)\mathcal{E}(-E[X])d\mu\\
&=&e^{-\frac{|\varphi|_H^2}{2}-i\langle E[X],\varphi\rangle_H}\int_W\mathcal{E}(i\varphi)fd\mu\\
&=&e^{-i\langle E[X],\varphi\rangle_H}\int_We^{i\delta(\varphi)}fd\mu\\
&=&e^{-i\langle E[X],\varphi\rangle_H}E\Big[e^{i\langle X,\varphi\rangle}\Big]\\
&=&E\Big[e^{i\langle X-E[X],\varphi\rangle}\Big].
\end{eqnarray*}
In other words, the density of $X-E[X]$ with respect to $\mu$ is $f\diamond\mathcal{E}(-E[X])$. The proof is complete.
\end{proof}

\subsection{Setting the covariance to be the identity}

We now turn our attention to the covariance of the random variable $X$. As it was mentioned in the introduction, the laws of $X$ and $aX$, where $a$ is a real number with $|a|\neq 1$, are in general singular to each other. Therefore, if we want to preserve the absolute continuity with respect to the underlying Wiener measure, we should perturb the covariance through a different procedure. We are interested in setting the covariance of $X$ to coincide with the identity operator, which is the covariance of the Wiener measure. Theorem \ref{set variance one} below suggests a solution to this problem for a particular class of densities. We first need the following auxiliary result.

\begin{lemma}\label{lemma}
If $f$ is a non negative element of $\mathcal{L}^p(W,\mu)$ for some $p>1$, then there exists a constant $C=C(p)$ such that, for all $h\in H$ satisfying $|h|_H<C$, the quantity $f\diamond\mathcal{E}_2(h)$ belongs to $\mathcal{L}^q(W,\mu)$ for all $q<p$. Moreover, $f\diamond\mathcal{E}_2(h)$ is also non negative.
\end{lemma}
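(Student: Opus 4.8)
The plan is to reduce the statement about $f \diamond \mathcal{E}_2(h)$ to a statement about $f \diamond \mathcal{E}(g)$ for a suitable $g$, so that Gjessing's lemma (already invoked in the proof of Theorem~\ref{set mean zero}) can be applied directly. The key observation is the explicit representation of $\mathcal{E}_2(h)$ from the Proposition: since
\begin{eqnarray*}
\mathcal{E}_2(h) = \frac{1}{\sqrt{1+|h|_H^2}}\exp\left\{\frac{\delta(h)^2}{2(1+|h|_H^2)}\right\},
\end{eqnarray*}
the random variable $\mathcal{E}_2(h)$ is manifestly non negative, and more importantly it is a (multiple of a) genuine exponential of a quadratic form in $\delta(h)$. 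The first step is therefore to fix $p>1$ and set $C = C(p) := \sqrt{1/(p'-1)}$ for a suitable auxiliary exponent $p' \in (1,p)$, or more simply to choose $C$ small enough that for $|h|_H < C$ one has $\mathcal{E}_2(h) \in \mathcal{L}^r(W,\mu)$ for some $r$ with $1/p + 1/r < 1$; by the integrability criterion in the Proposition this amounts to requiring $|h|_H^2 < 1/(r-1)$, and such an $r>1$ can always be found once $|h|_H$ is small, with the threshold depending only on $p$.

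The second step is to control the $\mathcal{L}^q$-norm of the Wick product $f \diamond \mathcal{E}_2(h)$. Here I would invoke Gjessing's lemma exactly as in the proof of Theorem~\ref{set mean zero}: Gjessing's lemma expresses $f \diamond g$, for $g$ a suitable (Wick-)exponential type factor, as a product of a translate of $f$ with an explicit density, and in particular it transfers non negativity of $f$ to non negativity of $f \diamond \mathcal{E}_2(h)$ and gives an $\mathcal{L}^q$ bound for every $q<p$. Concretely, since $\mathcal{E}_2(h)$ has the form $c\exp\{\delta(h)^2/(2(1+|h|_H^2))\}$, one can diagonalize the quadratic exponent — writing the quadratic Gaussian chaos $\delta(h)^2$ in terms of a rotation that isolates the single direction $h/|h|_H$ — and thereby reduce the Wick product with $\mathcal{E}_2(h)$ to an operation involving only ordinary (linear) Wick exponentials $\mathcal{E}(\cdot)$ in that one direction, at which point Gjessing's lemma in the form already used applies. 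The non negativity assertion is then immediate, since both $f$ and $\mathcal{E}_2(h)$ are non negative and Gjessing's lemma preserves this.

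The main obstacle I anticipate is the integrability bookkeeping in the second step: Gjessing's lemma produces a translated copy of $f$ multiplied by a density factor, and one must check that the product lands in $\mathcal{L}^q$ for \emph{every} $q<p$ (not merely for $q$ close to $1$). This requires a Hölder argument splitting the exponent between the translated $f$ — which is in $\mathcal{L}^p$ with the same norm, translation being measure-preserving up to a Cameron--Martin density — and the explicit Gaussian-type density factor coming from $\mathcal{E}_2(h)$, whose $\mathcal{L}^s$-norms are finite precisely when $|h|_H$ is small relative to $s$. Balancing these two contributions so that the Hölder conjugate of $q$ can be taken arbitrarily large (as $q \uparrow p$ forces the conjugate down toward $p/(p-1)$, leaving room) is where the constant $C(p)$ must be chosen, and this is the only genuinely quantitative point in the argument; everything else is a direct transcription of the techniques already deployed in Theorem~\ref{set mean zero}.
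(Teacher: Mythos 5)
Your proposal contains a genuine gap at its central step. Gjessing's lemma, as used in the proof of Theorem \ref{set mean zero}, applies to Wick multiplication by the \emph{linear} Wick exponential $\mathcal{E}(g)=\exp^{\diamond}\{\delta(g)\}$: it gives $f\diamond\mathcal{E}(g)=(T_{-g}f)\cdot\mathcal{E}(g)$. It does not apply to $\mathcal{E}_2(h)$, which is the Wick exponential of a \emph{quadratic} functional, and your proposed remedy --- ``diagonalize the quadratic exponent'' and thereby reduce to linear Wick exponentials in the direction $h/|h|_H$ --- is not an argument. The Wick product is defined through the chaos decomposition, not pointwise, so rewriting the ordinary exponential $\exp\{\delta(h)^2/(2(1+|h|_H^2))\}$ in rotated coordinates tells you nothing about how $f\diamond\mathcal{E}_2(h)$ relates to translates of $f$. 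The missing idea, which is exactly what the paper supplies, is the mixture representation
\begin{eqnarray*}
\mathcal{E}_2(h)=\int_{\mathbb{R}}\mathcal{E}(\lambda h)\,d\mu_1(\lambda),
\end{eqnarray*}
with $\mu_1$ the standard one-dimensional Gaussian measure. This linearizes the quadratic Wick exponential as a Gaussian average of linear ones; one then gets $f\diamond\mathcal{E}_2(h)=\int_{\mathbb{R}}f\diamond\mathcal{E}(\lambda h)\,d\mu_1(\lambda)$, applies Gjessing's lemma to each integrand, and controls $\int_{\mathbb{R}}\Vert f\diamond\mathcal{E}(\lambda h)\Vert_q\,d\mu_1(\lambda)$ via H\"older together with the translation estimate $\Vert T_{-\lambda h}f\Vert_{q+\varepsilon}\leq\Vert f\Vert_{q+2\varepsilon}\exp\{\lambda^2|h|_H^2/(2\varepsilon)\}$ from Janson; the smallness condition on $|h|_H$ is what makes the resulting Gaussian integral in $\lambda$ converge. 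Without the mixture formula your integrability argument has no starting point.

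A second, related flaw: you assert that non negativity of $f\diamond\mathcal{E}_2(h)$ is ``immediate, since both $f$ and $\mathcal{E}_2(h)$ are non negative.'' The Wick product of two non negative random variables need not be non negative; positivity of both factors is not sufficient. The paper instead shows that $\mathcal{E}_2(h)$ is \emph{strongly positive} in the sense of Nualart and Zakai (again via the mixture representation above), and it is strong positivity of one factor --- not mere positivity --- that guarantees the Wick product with a non negative $f$ is non negative. Your Hölder bookkeeping in the last paragraph is sensible in spirit and close to what the paper does, but it can only be carried out after the reduction to linear Wick exponentials, which your proposal does not actually achieve.
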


\begin{proof}
We first prove the statement about the sign of $f\diamond\mathcal{E}_2(h)$. The Wick exponential $\mathcal{E}_2(h)$ (as well as $\mathcal{E}(h)$) belongs to the class of the so-called strongly positive random variables (see \cite{Nualart Zakai}). In fact, via a simple verification one sees that $\mathcal{E}_2(h)$ can be also represented as
\begin{eqnarray*}
\mathcal{E}_2(h)=\int_{\mathbb{R}}\mathcal{E}(\lambda h)d\mu_1(\lambda).
\end{eqnarray*}
where $\mu_1$ denotes the standard one dimensional Gaussian measure. The last identity, together with Theorem 5.1 in \cite{Nualart Zakai}, implies the above mentioned property. When we Wick-multiply a non negative random variable ($f$ in our case) with a strongly positive element, we obtain a new non negative random variable (see \cite{Nualart Zakai}).\\
We now turn to the claim on the integrability of $f\diamond\mathcal{E}_2(h)$.  First of all, we observe that, by means of the identity (\ref{S-transform}), one can verify that
\begin{eqnarray*}
f\diamond\mathcal{E}_2(h)=\int_{\mathbb{R}}f\diamond\mathcal{E}(\lambda h)d\mu_1(\lambda).
\end{eqnarray*}
Therefore, the desired $\mathcal{L}^q(W,\mu)$-integrability of $f\diamond\mathcal{E}_2(h)$ will be a consequence of the condition
\begin{eqnarray*}
\int_{\mathbb{R}}\Vert f\diamond\mathcal{E}(\lambda h)\Vert_q d\mu_1(\lambda)<+\infty.
\end{eqnarray*}
According to the Gjessing's lemma and H\"older inequality we have
\begin{eqnarray*}
\int_{\mathbb{R}}\Vert f\diamond\mathcal{E}(\lambda h)\Vert_q d\mu_1(\lambda)&=&\int_{\mathbb{R}}\Vert (T_{-\lambda h}f)\cdot\mathcal{E}(\lambda h)\Vert_q d\mu_1(\lambda)\\
&\leq&\int_{\mathbb{R}}\Vert T_{-\lambda h}f\Vert_{q+\varepsilon}\cdot\Vert\mathcal{E}(\lambda h)\Vert_{q_1} d\mu_1(\lambda)\\
&=&\int_{\mathbb{R}}\Vert T_{-\lambda h}f\Vert_{q+\varepsilon}\cdot\exp\left\{\frac{q_1-1}{2}\lambda^2|h|_H^2\right\} d\mu_1(\lambda)\\
&\leq&\Vert f\Vert_{q+2\varepsilon}\int_{\mathbb{R}}\exp\left\{\frac{1}{2\varepsilon}\lambda^2|h|_H^2\right\}  \cdot\exp\left\{\frac{q_1-1}{2}\lambda^2|h|_H^2\right\} d\mu_1(\lambda).
\end{eqnarray*}
Here, $T_{-\lambda h}$ denotes the translation operator (see for instance Chapter 10 in \cite{Kuo}), $\varepsilon$ is an arbitrary positive small number while the real number $q_1$ satisfies $\frac{1}{q+\varepsilon}+\frac{1}{q_1}=\frac{1}{q}$. Moreover, in the last inequality we utilized estimate (14.7) of Theorem 14.1 from \cite{Janson}. The last integral is finite if $|h|^2_H$ is small enough (depending on $\varepsilon$ and $q$). Setting $p=q+2\varepsilon$ we get the desired property.
\end{proof}

\noindent We are ready for the main result of the present subsection. The constant $C$ appearing in the statement of the theorem is the one from the previous lemma.

\begin{theorem}\label{set variance one}
Let $X$ be a random variable taking values on $W$ and assume that the law of $X$ is absolutely continuous with respect to $\mu$ with a density $f$ belonging to $\mathcal{L}^p(W,\mu)$ for some $p>1$. We suppose that $E[X]=0$ and
\begin{eqnarray}\label{hp covariance}
cov(\langle X,\varphi_1\rangle,\langle X,\varphi_2\rangle)=\langle\varphi_1,\varphi_2\rangle_H-\langle g,\varphi_1\rangle_H\langle g,\varphi_2\rangle_H
\end{eqnarray}
for some $g\in H$ with $|g|_H<C\wedge 1$ and all $\varphi_1,\varphi_2\in H$. Then, if $Z$ denotes a one dimensional standard Gaussian random variable independent of $X$, we have that
\begin{eqnarray}\label{new X}
\hat{X}:=X+Zg
\end{eqnarray}
has covariance
\begin{eqnarray}\label{covariance}
cov(\langle\hat{X},\varphi_1\rangle,\langle\hat{X},\varphi_2\rangle)=\langle\varphi_1,\varphi_2\rangle_H.
\end{eqnarray}
Moreover, the density of $\hat{X}$ with respect to $\mu$ is given by
\begin{eqnarray*}
\hat{f}:=f\diamond\mathcal{E}_2(g).
\end{eqnarray*}
\end{theorem}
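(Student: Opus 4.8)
The plan is to establish the two assertions separately: the covariance identity (\ref{covariance}), which is an immediate computation, and the formula for the density, which is obtained by mimicking the proof of Theorem \ref{set mean zero} through characteristic functionals. For the covariance, note first that $\hat X=X+Zg$ has mean zero, since $E[X]=0$ and $E[Z]=0$; hence for $\varphi_1,\varphi_2\in H$
\[
cov(\langle\hat X,\varphi_1\rangle,\langle\hat X,\varphi_2\rangle)=E\big[(\langle X,\varphi_1\rangle+Z\langle g,\varphi_1\rangle_H)(\langle X,\varphi_2\rangle+Z\langle g,\varphi_2\rangle_H)\big].
\]
Expanding the product and using the independence of $Z$ and $X$ together with $E[Z]=0$ and $E[Z^2]=1$, the two mixed terms vanish and one is left with $cov(\langle X,\varphi_1\rangle,\langle X,\varphi_2\rangle)+\langle g,\varphi_1\rangle_H\langle g,\varphi_2\rangle_H$; inserting hypothesis (\ref{hp covariance}) the rank-one defect cancels and (\ref{covariance}) follows.

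For the density, I would compare the characteristic functional of $\hat X$ with that of the measure $\hat f\,d\mu$. On one side, independence of $Z$ and $X$ and the Fourier transform of a one dimensional standard Gaussian give, for $\varphi\in W^*$,
\[
E\big[e^{i\langle\hat X,\varphi\rangle}\big]=E\big[e^{i\langle X,\varphi\rangle}\big]\,E\big[e^{iZ\langle g,\varphi\rangle_H}\big]=E\big[e^{i\langle X,\varphi\rangle}\big]\,e^{-\frac12\langle g,\varphi\rangle_H^2}.
\]
On the other side, writing $\mathcal E(i\varphi):=\exp\{i\delta(\varphi)+|\varphi|_H^2/2\}$ so that $e^{i\delta(\varphi)}=e^{-|\varphi|_H^2/2}\mathcal E(i\varphi)$, and applying the complexified form of identity (\ref{S-transform}) used in the proof of Theorem \ref{set mean zero}, one gets
\[
\int_W e^{i\delta(\varphi)}(f\diamond\mathcal E_2(g))\,d\mu=e^{-\frac{|\varphi|_H^2}{2}}\Big(\int_W f\,\mathcal E(i\varphi)\,d\mu\Big)\Big(\int_W\mathcal E_2(g)\,\mathcal E(i\varphi)\,d\mu\Big),
\]
where the first bracket equals $e^{|\varphi|_H^2/2}E[e^{i\langle X,\varphi\rangle}]$.

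It remains to evaluate $\int_W\mathcal E_2(g)\,\mathcal E(i\varphi)\,d\mu$. Here I would use the representation $\mathcal E_2(g)=\int_{\mathbb R}\mathcal E(\lambda g)\,d\mu_1(\lambda)$ established in the proof of Lemma \ref{lemma}, interchange the $W$- and $\mathbb R$-integrals (legitimate since $|\mathcal E(i\varphi)|$ is a constant and $\int_W\mathcal E(\lambda g)\,d\mu=1$ for every $\lambda$), and invoke the elementary identity $\int_W\mathcal E(\lambda g)\,\mathcal E(i\varphi)\,d\mu=e^{i\lambda\langle g,\varphi\rangle_H}$ — which follows either from the explicit complex Gaussian integral or by reading it as the $S$-transform of $\mathcal E(\lambda g)$ evaluated at $i\varphi$. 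Integrating in $\lambda$ against $\mu_1$ then yields $e^{-\langle g,\varphi\rangle_H^2/2}$, and collecting the three factors reproduces exactly $E[e^{i\langle X,\varphi\rangle}]\,e^{-\langle g,\varphi\rangle_H^2/2}=E[e^{i\langle\hat X,\varphi\rangle}]$. Since $\hat f=f\diamond\mathcal E_2(g)$ is non-negative and belongs to $\mathcal L^q(W,\mu)$ for all $q<p$ by Lemma \ref{lemma} (here $|g|_H<C$ is used), and $\int_W\hat f\,d\mu=1$ (take $\varphi=0$ above, or use (\ref{functor})), the measure $\hat f\,d\mu$ is a Borel probability measure on $W$ with the same characteristic functional as the law of $\hat X$; as characteristic functionals on $W^*$ determine Borel probability measures on the separable Banach space $W$, this identifies $\hat f$ as the density of $\hat X$.

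The algebraic cancellation in the covariance and the Fubini interchanges are routine, the latter being harmless because $\mathcal E_2(g)\geq 0$ has unit mass and $|\mathcal E(i\varphi)|$ is constant. The one point genuinely requiring care is the use of (\ref{S-transform}) with the complex multiplier $\mathcal E(i\varphi)$ in place of a genuine $\mathcal E(h)$, $h\in H$: this is precisely the extension already carried out and justified in the proof of Theorem \ref{set mean zero} (relying on $f\diamond\mathcal E_2(g)\in\mathcal L^{p'}(W,\mu)$ for some $p'>1$, which Lemma \ref{lemma} supplies), so I would simply appeal to it rather than redo it.
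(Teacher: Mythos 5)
Your proposal is correct and follows essentially the same route as the paper: the direct covariance expansion using independence of $Z$ and $X$, and the identification of the density by matching the characteristic functional of $\hat X$ with that of $(f\diamond\mathcal{E}_2(g))\,d\mu$ via the complexified identity (\ref{S-transform}). The only differences are that you spell out two details the paper leaves implicit, namely the evaluation $\int_W\mathcal{E}_2(g)\mathcal{E}(i\varphi)\,d\mu=e^{-\langle g,\varphi\rangle_H^2/2}$ through the Gaussian-mixture representation of $\mathcal{E}_2(g)$ and the final appeal to uniqueness of measures with a given characteristic functional, which are welcome additions rather than a different argument.
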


\begin{remark}
We note that condition $|g|_H<C\wedge 1$ guarantees on one side the $\mathcal{L}^q(W,\mu)$ integrability of the density $f\diamond\mathcal{E}_2(h)$ (see Lemma \ref{lemma}) and on the other the fact that the right hand side of (\ref{hp covariance}) is non negative definite. We also observe that assumption (\ref{hp covariance}) corresponds to the statement that the covariance of $X$ is given by $I-\Pi_g$ where $I$ is the identity operator on $H$ and $\Pi_g$ is the projection on $g$, i.e. $\Pi_g\varphi=\langle\varphi,g\rangle_H g$.
\end{remark}

\begin{proof}
First of all, we observe that
\begin{eqnarray*}
cov(\langle X,\varphi_1\rangle,\langle X,\varphi_2\rangle)&=&E[\langle X,\varphi_1\rangle\cdot\langle X,\varphi_2\rangle]\\
&=&\int_W\langle w,\varphi_1\rangle\cdot\langle w,\varphi_2\rangle f(w)d\mu(w)\\
&=&\int_W\langle w,\varphi_1\rangle\diamond\langle w,\varphi_2\rangle f(w)d\mu(w)+\langle \varphi_1,\varphi_2\rangle_H\\
&=&\int_W\delta^2(\varphi_1\hat{\otimes}\varphi_2)f(w)d\mu(w)+\langle \varphi_1,\varphi_2\rangle_H\\
&=&2\langle f_2,\varphi_1\hat{\otimes}\varphi_2\rangle_{H^{\otimes 2}}+\langle \varphi_1,\varphi_2\rangle_H.
\end{eqnarray*}
Hence, assumption (\ref{hp covariance}) corresponds to
\begin{eqnarray*}
2\langle f_2,\varphi_1\hat{\otimes}\varphi_2\rangle_{H^{\otimes 2}}=-\langle g,\varphi_1\rangle_H\langle g,\varphi_2\rangle_H
\end{eqnarray*}
that means
\begin{eqnarray*}
f_2=-\frac{1}{2}g^{\otimes 2}.
\end{eqnarray*}
Exploiting the assumptions on $Z$, we can write
\begin{eqnarray*}
cov(\langle\hat{X},\varphi_1\rangle,\langle\hat{X},\varphi_2\rangle)&=&cov(\langle X+Zg,\varphi_1\rangle,\langle X+Zg,\varphi_2\rangle)\\
&=&cov(\langle X,\varphi_1\rangle+Z\langle g,\varphi_1\rangle,\langle X,\varphi_2\rangle_h+Z\langle g,\varphi_2\rangle)\\
&=&cov(\langle X,\varphi_1\rangle,\langle X,\varphi_2\rangle)+\langle g,\varphi_1\rangle_H\cdot\langle g,\varphi_2\rangle_H\\
&=&\langle \varphi_1,\varphi_2\rangle_H.
\end{eqnarray*}
Moreover,
\begin{eqnarray*}
E[\exp\{i\langle \hat{X},\varphi\rangle\}]&=&E[\exp\{i\langle X+Zg,\varphi\rangle\}]\\
&=&E[\exp\{i\langle X,\varphi\rangle\}]\cdot E[\exp\{iZ\langle g,\varphi\rangle_H\}]\\
&=&\int_W\exp\{i\langle w,\varphi\rangle\}f(w)d\mu(w)\cdot\exp\left\{-\frac{\langle g,\varphi\rangle_H^2}{2}\right\}\\
&=&e^{-\frac{|\varphi|_H^2}{2}}\int_W\mathcal{E}(i\varphi)f(w)d\mu(w)\cdot
\int_W\mathcal{E}(i\varphi)\mathcal{E}_2(g)d\mu(w)\\
&=&e^{-\frac{|\varphi|_H^2}{2}}\int_W\mathcal{E}(i\varphi)(f\diamond\mathcal{E}_2(g))(w)d\mu(w)\\
&=&\int_W\exp\{i\langle w,\varphi\rangle\}(f\diamond\mathcal{E}_2(g))(w)d\mu(w).
\end{eqnarray*}
This proves that $f\diamond\mathcal{E}_2(g)$ is the density of the law of $\hat{X}$ with respect to $\mu$.
\end{proof}

\noindent Combining Theorem \ref{set variance one} with Theorem \ref{set mean zero} we obtain the following straightforward corollary.

\begin{corollary}
Let $X$ be a random variable taking values on $W$ and assume that the law of $X$ is absolutely continuous with respect to $\mu$ with a density $f$ belonging to $\mathcal{L}^p(W,\mu)$ for some $p>1$. We assume that
\begin{eqnarray*}
cov(\langle X,\varphi_1\rangle,\langle X,\varphi_2\rangle)=\langle\varphi_1,\varphi_2\rangle_H-\langle g,\varphi_1\rangle_H\langle g,\varphi_2\rangle_H
\end{eqnarray*}
for some $g\in H$ and all $\varphi_1,\varphi_2\in H$. Then, if $Z$ denotes a one dimensional standard Gaussian random variable independent of $X$, we have that
\begin{eqnarray}\label{new X}
\hat{X}:=X-E[X]+Zg
\end{eqnarray}
has mean zero and covariance
\begin{eqnarray*}
cov(\langle\hat{X},\varphi_1\rangle,\langle\hat{X},\varphi_2\rangle)=\langle\varphi_1,\varphi_2\rangle_H.
\end{eqnarray*}
Moreover, the density of $\hat{X}$ with respect to $\mu$ is given by
\begin{eqnarray*}
\hat{f}:=f\diamond\mathcal{E}(-E[X])\diamond\mathcal{E}_2(g).
\end{eqnarray*}
\end{corollary}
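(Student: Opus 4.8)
The plan is to derive the statement by chaining Theorem~\ref{set mean zero} and Theorem~\ref{set variance one}. First I would apply Theorem~\ref{set mean zero} to $X$: by Remark~\ref{existence of mean} the mean $E[X]$ is a well-defined element of $H$, and the theorem tells us that $Y:=X-E[X]$ has $\mu$-density $f\diamond\mathcal{E}(-E[X])$, which is non negative and belongs to $\mathcal{L}^{q}(W,\mu)$ for every $q<p$. Since $p>1$ we may fix some $q_{0}$ with $1<q_{0}<p$, so that $Y$ has a density in $\mathcal{L}^{q_{0}}(W,\mu)$, and of course $E[Y]=0$.

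Next I would check that $Y$ meets the hypotheses of Theorem~\ref{set variance one} with the very same $g$ (and, as in that theorem, with $|g|_{H}<C\wedge 1$, which should be read as part of the assumption here as well). The key observation is that centering does not affect second moments, so
\begin{eqnarray*}
cov(\langle Y,\varphi_1\rangle,\langle Y,\varphi_2\rangle)=cov(\langle X,\varphi_1\rangle,\langle X,\varphi_2\rangle)=\langle\varphi_1,\varphi_2\rangle_H-\langle g,\varphi_1\rangle_H\langle g,\varphi_2\rangle_H
\end{eqnarray*}
for all $\varphi_1,\varphi_2\in H$, which is precisely condition (\ref{hp covariance}). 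Moreover, since $E[X]$ is a deterministic element of $H$, independence of $Z$ and $X$ entails independence of $Z$ and $Y=X-E[X]$, so the set-up of Theorem~\ref{set variance one} is genuinely reproduced.

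Hence Theorem~\ref{set variance one} applies to $Y$ and shows that $\hat X=Y+Zg=X-E[X]+Zg$ has covariance $\langle\varphi_1,\varphi_2\rangle_H$ and $\mu$-density $(f\diamond\mathcal{E}(-E[X]))\diamond\mathcal{E}_2(g)$; by associativity of the Wick product this equals $f\diamond\mathcal{E}(-E[X])\diamond\mathcal{E}_2(g)$, as claimed. Finally $E[\hat X]=E[Y]+E[Z]\,g=0$, because $E[Y]=0$ and $Z$ is centered, so $\hat X$ has mean zero as well. I do not expect a real obstacle: the corollary is a direct concatenation of the two theorems already proved, and the only thing to keep track of is the bookkeeping of the integrability exponents — one loses a little $\mathcal{L}^{p}$-regularity at each of the two steps, so one must verify at the intermediate stage that the density of $Y$ still lies in some $\mathcal{L}^{q}(W,\mu)$ with $q>1$ — together with the smallness requirement on $g$ inherited from Theorem~\ref{set variance one}.
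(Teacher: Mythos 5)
Your proof is correct and is exactly the route the paper intends: the corollary is stated there without proof as a ``straightforward'' concatenation of Theorem~\ref{set mean zero} and Theorem~\ref{set variance one}, which is precisely what you carry out, including the two points worth making explicit (the intermediate density of $X-E[X]$ still lies in some $\mathcal{L}^{q}$ with $q>1$, and the smallness condition $|g|_H<C\wedge 1$ must be understood as part of the hypotheses even though the corollary's statement omits it).
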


\noindent In the next example we show a density $f$ fulfilling all the assumptions of Theorem \ref{set variance one}

\begin{example}
Consider the function
\begin{eqnarray*}
f:=1-\frac{1}{2}\delta^2\left(g^{\otimes 2}\right)+\delta^4(g^{\otimes 4})
\end{eqnarray*}
where $g\in H$ with $|g|_H<C$, for some positive constant $C<1$ to be fixed later. The function $f$ possesses the properties to be a density with respect to $\mu$. In fact, exploiting the interplay between Hermite polynomials, multiple Wiener-It\^o integrals and Wick product we can write
\begin{eqnarray*}
\delta^2\left(g^{\otimes 2}\right)=\delta(g)^2-|g|_H^2
\end{eqnarray*}
and
\begin{eqnarray*}
\delta^4(g^{\otimes 4})=\delta(g)^4-6|g|^2_H\delta(g)^2+3|g|^4_H.
\end{eqnarray*}
These give
\begin{eqnarray*}
f=\delta(g)^4-\left(\frac{1}{2}+6|g|^2_H\right)\delta(g)^2+3|g|^4_H+\frac{1}{2}|g|_H^2+1.
\end{eqnarray*}
Since the discriminant of the right hand side above is negative for $|g|_H=0$, by continuity we can find a constant $C$ such that $f\geq 0$ for $|g|_H<C$. Moreover, $\int_Wf(w)d\mu(w)=1$ by the properties of the multiple It\^o integrals. This shows that $f$ is a density function. We also remark that:
\begin{itemize}
\item $f\in\mathcal{L}^p(W,\mu)$ for any $p\geq 1$ since it has a finite Wiener-It\^o chaos expansion; \\
\item if $f$ is the density of $X$, then $E[X]=0$ because the kernel of order one of $f$ is identically zero;\\
\item $f_2=-\frac{1}{2}g^{\otimes 2}$, which is equivalent to condition (\ref{hp covariance}).
\end{itemize}
Therefore, if $|g|_H$ is small enough, then the function $f$ satisfies all the assumption of Theorem \ref{set variance one}.
\end{example}

\end{document}